\newcommand{\nc}{\newcommand}
\newcommand{\mc}{\mathcal}
\nc{\on}{\operatorname}
\nc{\g}{\mathfrak{g}}
\nc{\n}{\mathfrak{n}}
\nc{\ch}{\on{CH}}
\nc{\wt}{\widetilde}
\renewcommand{\P}{\mc{P}}
\nc{\F}{\mc{F}}
\nc{\C}{\mc{C}}
\nc{\M}{\on{M}}
\renewcommand{\H}{\on{H}}
\nc{\G}{\mc{G}}
\renewcommand{\S}{\mc{S}}
\newtheorem{theorem}{Theorem}
\newtheorem{lemma}{lemma}
\begin{document}

\title{Incidence categories}
\author{Matt Szczesny} \thanks{The author is supported by an NSA grant}
\address{Department of Mathematics  and Statistics, 
         Boston University, Boston MA, USA}
\email{szczesny@math.bu.edu}

\begin{abstract}
Given a family $\F$ of posets closed under disjoint unions and the operation of taking convex subposets, we construct a category $\C_{\F}$ called the \emph{incidence category of $\F$}. This category is "nearly abelian" in the sense that all morphisms have kernels/cokernels, and possesses a symmetric monoidal structure akin to direct sum. The Ringel-Hall algebra of $\C_{\F}$ is isomorphic to the incidence Hopf algebra of the collection $\P(\F)$ of order ideals of posets in $\F$. This construction generalizes the categories introduced by K. Kremnizer and the author In the case when $\F$ is the collection of posets coming from rooted forests or Feynman graphs. 
\end{abstract}

\maketitle

\section{Introduction}

The notion of the \emph{incidence algebra} of an interval-closed family $\P$ of posets was introduced by G.-C. Rota in \cite{R}. The work of W. Schmitt demonstrated that incidence algebras frequently possess important additional structure - namely that of a Hopf algebra. The seminal papers \cite{Sch2, Sch} established various key structural and combinatorial properties of  incidence Hopf algebras. 

In this paper, we show that a certain class of incidence Hopf algebras can be "categorified". Given a family $\F$ of posets which are closed under disjoint unions and the operation of taking convex subposets, we construct a category $\C_{\F}$, whose objects are in one-to-one correspondence with the posets in $\F$. While $\C_{\F}$ is not abelian (morphisms only form a set), it is "nearly" so, in the sense that all morphisms possess kernels and cokernels, it has a null object, and symmetric monoidal structure akin to direct sum. We can therefore talk about exact sequences in $\C_{\F}$, its Grothendieck group, and Yoneda Ext's. 

In particular, we can define the Ringel-Hall algebra $\H_{\C_{\F}}$ of $\C_{\F}$. $\H_{\C_{\F}}$ is the 
$\mathbb{Q}$--vector space of finitely supported functions on isomorphism classes of $\C_{\F}$:
\[
\H_{\C_{\F}} := \{ f: \on{Iso}(\C_{\F}) \rightarrow \mathbb{Q} | |supp(f)| < \infty  \}
\]
with product given by convolution:
\begin{equation} \label{prod}
f \star g (M) = \sum_{A \subset M} f(A) g(M/A). 
\end{equation}
$\H_{\C_{\F}}$ possesses a co-commutative co-product given by
\begin{equation} \label{cop}
\Delta(f)(M,N)=f(M \oplus N) 
\end{equation}
We show that $\H_{\C_{\F}}$ is isomorphic to the incidence Hopf algebra of the family $\P(\F)$ of order ideals of posets in $\F$. This is the sense in which $\C_{\F}$ is a categorification. The resulting Hopf algebra $\H_{\C_{\F}}$ is graded connected and co-commutative, and so by the Milnor-Moore theorem isomorphic to $U(\n_{\F})$, where $\n_{\F}$ is the Lie algebra of its primitive elements. 

When $\F$ is the family of posets coming from rooted forests or Feynman graphs, $\C_{\F}$ coincides with the categories introduced by K. Kremnizer and the author in \cite{KS}. 

\bigskip

\noindent {\bf Acknowledgements: } I would like to thank Muriel Livernet for valuable conversations and in particular for suggesting that there ought to be an interpretation of the categories introduced in \cite{KS} in terms of incidence algebras.  I would also like to thank Dirk Kreimer, Valerio Toledano Laredo, and Kobi Kremnizer for valuable conversations. 

\section{Recollections on posets}

We begin by recalling some basic notions and terminology pertaining to posets ( partially ordered sets) following \cite{Sch, St}. 

\begin{enumerate}
\item An \emph{interval} is a poset having unique minimal and maximal elements. For $x, y$ in a poset $P$, we denote by $[x,y]$ the interval
\[
[x,y] := \{ z \in P : x \leq z \leq y \}
\]  
If $P$ is an interval, we will often denote by $0_P$ and $1_P$ the minimal and maximal elements. 
\item An \emph{order ideal} in a poset $P$ is a subset $L \subset P$ such that whenever $y \in L$ and $x \leq y$ in $P$, then $x \in L$.
\item  A subposet $Q$ of $P$ is \emph{convex} if, whenever $x \leq y$ in $Q$ and $z \in P$ satisfies $x \leq z \leq y$, then $z \in Q$. Equivalently, $Q$ is convex if $Q = L \backslash I$ for order ideals $I \subset L$ in $P$. 
\item Given two posets $P_1, P_2$, their disjoint union is naturally a poset, which we denote by $P_1 + P_2$. In $P_1 + P_2$, $x \leq y$ if both lie in either $P_1$ or $P_2$, and $x \leq y$ there.
\item A poset which is not the union of two non-empty posets is said to be \emph{connected}. 
\item The cartesian product $P_1 \times P_2$ is a poset where $(x,y) \leq (x',y')$ iff $x \leq x'$ and $y \leq y'$. 
\item A \emph{distributive lattice} is a poset $P$ equipped with two operations $\wedge$, $\vee$ that satisfy the following properties:
\begin{enumerate}
\item $\wedge, \vee$ are commutative and associative
\item $\wedge, \vee$ are idempotent - i.e. $x \wedge x = x$, $x \vee x = x$
\item $x \wedge (x \vee y) = x = x \vee ( x \wedge y)$
\item $x \wedge y = x \iff x \vee y = y \iff x \leq y$
\item $ x \vee (y \wedge z) = (x \vee y) \wedge (x \vee z)$
\item $x \wedge (y \vee z) = (x \wedge y) \vee (x \wedge z)$
\end{enumerate}

\item For a poset $P$, denote by $J_P$ the poset of order ideals of $P$,  ordered by inclusion. $J_P$ forms a distributive lattice with $I_1 \vee I_2 := I_1 \cup I_2$ and $I_1 \wedge I_2 := I_1 \cap I_2$ for $I_1, I_2 \in J_P$. If $P_1, P_2$ are posets, we have $J_{P_1 + P_2} = J_{P_1} \times J_{P_2}$, and if $I,L \in J_P$, and $I \subset L$, then $[I,L]$ is naturally isomorphic to the lattice of order ideals $J_{L \backslash I}$. 

\end{enumerate}
\section{From posets to categories}

Let $\F$ be a family of posets which is closed under the formation of disjoint unions and the operation of taking convex subposets, and let 
$\P(\F) = \{ J_P : P \in \F \}$ be the corresponding family of distributive lattices of order ideals. For each pair 
$P_1, P_2 \in \F$, let $\M(P_1, P_2)$ denote a set of maps $P_1 \rightarrow P_2$ such that the collections 
$\M(P_1,P_2)$ satisfy the following properties:

\begin{enumerate}
\item for each $f \in \M(P_1, P_2)$, $f: P_1 \rightarrow P_2$ is a poset isomorphism
\item $M(P,P)$ contains the identity map
\item If $f \in \M(P_1, P_2)$ then $f^{-1} \in \M(P_2, P_1)$. 
\item if $f \in \M(P_1, P_2)$ and $g \in \M(P_2, P_3)$, then $g \circ f \in \M(P_1, P_3)$
\item If $f \in \M(P_1,P_2)$ and $g \in \M(Q_1, Q_2)$ then $f \cup g \in \M(P_1 + Q_1, P_2 + Q_2)$, where $f \cup g$ denotes the map induced on the disjoint union. 
\end{enumerate}
Typically, the collection $\M(P_1, P_2)$ will consist of poset isomorphisms respecting some additional structure (such as for instance a coloring). It follows from the above properties that $\M(P,P)$ forms a group, which we denote $\on{Aut}_{\M}(P)$. 

\subsection{The category $\C_{\F}$}

We proceed to define a category $\C_{\F}$, called the \emph{incidence category of $\F$} as follows. Let
$$\on{Ob}(\C_{\F} ) := \F = \{ X_P : P \in \F \}$$ and 
$$ \on{Hom}(X_{P_1}, X_{P_2}) := \{ ( I_1, I_2, f) : I_i \in J_{P_i},  f \in \M(P_1 \backslash I_1, I_2) \} \; \; i=1,2$$
We need to define the composition of morphisms
\[
\on{Hom}(X_{P_1}, X_{P_2}) \times \on{Hom}(X_{P_2}, X_{P_3}) \rightarrow \on{Hom}(X_{P_1}, X_{P_3})
\]
Suppose that $(I_1, I_2, f) \in \on{Hom}(X_{P_1}, X_{P_2})$ and $(I'_2, I'_3, g) \in \on{Hom}(X_{P_2}, X_{P_3})$. Their composition is the morphism $(K_1, K_3, h)$ defined as follows. 
\begin{itemize}
\item We have $I_2 \wedge I'_2 \subset I_2$, and since $f: P_1 \backslash I_1 \rightarrow I_2$ is an isomorphism, $f^{-1}(I_2 \wedge I'_2)$ is an order ideal of $P_1 \backslash I_1$. Since in $J_{P_1}$, $[I_1, P] \simeq J_{P_1 \backslash I_1}$, we have that $f^{-1}(I_2 \wedge I'_2)$ corresponds to an order ideal $K_1 \in J_{P_1}$ such that $I_1 \subset K_1$. 
\item We have $I'_2 \subset I_2 \vee I'_2$, and since $ [I'_2, P_2] \simeq J_{P_2 \backslash I'_2}$, $I_2 \vee I'_2$ corresponds to an order ideal $L_2 \in J_{P_2 \backslash I'_2}$. Since $g: P_2 \backslash I'_2 \rightarrow I'_3$ is an isomorphism, $g(L_2) \subset J_{I'_3}$, and since $J_{I'_3} \subset J_{P_3}$, $g(L_2)$ corresponds to an order ideal $K_3 \in J_{P_3}$ contained in $I'_3$. 
\item  The isomorphism $f: P_1 \backslash I_1 \rightarrow I_2$ restricts to an isomorphism $\bar{f}: P_1 \backslash K_1 \rightarrow I_2 \backslash I_2 \wedge I'_2 = I_2 \backslash I'_2 $, and the isomorphism $g: P_2 \backslash I'_2$ restricts to an isomorphism $\bar{g}: I_2 \vee I'_2 \backslash I'_2 = I_2 \backslash I'_2 \rightarrow  K_3$. Thus, $g \circ f: P_1 \backslash K_1 \rightarrow K_3$ is an isomorphism and $g \circ f \in \M(P_1 \backslash K_1, K_3)$ by the property $(4)$ above. 
 \end{itemize}

\begin{lemma}
Composition of morphisms is associative. 
\end{lemma}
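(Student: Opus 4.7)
The plan is to fix three composable morphisms $a = (A_1, A_2, \alpha) \in \on{Hom}(X_{P_1}, X_{P_2})$, $b = (B_2, B_3, \beta) \in \on{Hom}(X_{P_2}, X_{P_3})$, and $c = (C_3, C_4, \gamma) \in \on{Hom}(X_{P_3}, X_{P_4})$, compute $(c \circ b) \circ a$ and $c \circ (b \circ a)$ according to the three-step recipe given above, and verify that the resulting triples in $\on{Hom}(X_{P_1}, X_{P_4})$ coincide. Since a morphism is determined by the data (order ideal of source, order ideal of target, underlying poset isomorphism), the verification naturally splits into three componentwise checks.

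For the third (map) component, both parenthesizations visibly produce the restriction of $\gamma \circ \beta \circ \alpha$ to the common domain, so once the domain and codomain are matched this step reduces to associativity of ordinary function composition. The substantive content is in showing that the two order ideals of $P_1$ produced by the two parenthesizations coincide, and likewise for $P_4$.

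My plan for these two checks rests on three structural facts already recorded in the recollection on posets: (i) each $J_P$ is a distributive lattice; (ii) a poset isomorphism $f : P \to Q$ induces a lattice isomorphism $J_P \to J_Q$ preserving $\wedge$ and $\vee$; and (iii) for order ideals $I \subset L$ of $P$, the interval $[I, L]$ in $J_P$ is naturally isomorphic to $J_{L \backslash I}$. Using (ii) and (iii) one can transport the nested meet-and-pullback computations through the various lattice identifications, reducing the equality of the two ideals in $J_{P_1}$ to a lattice identity among $A_2$, $B_2$, and $\beta^{-1}(B_3 \wedge C_3)$ inside the distributive lattice $J_{P_2}$. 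That identity then falls out of distributivity. The $J_{P_4}$ side is handled dually, this time using distributivity in $J_{P_3}$ together with the images of the relevant ideals under $\gamma$.

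I expect the main obstacle to be purely notational rather than conceptual: the composition rule involves several nested lattice isomorphisms, so writing out both parenthesizations in parallel is unwieldy. The argument itself is a bookkeeping exercise in distributive lattice combinatorics, but care will be needed to verify at each stage that meets and joins are taken in the correct lattice, and that $\alpha$, $\beta$, $\gamma$ are restricted to the correct convex subposets so that the resulting maps have matching domains and codomains before invoking associativity of function composition.
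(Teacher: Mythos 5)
Your plan is correct and takes essentially the same route as the paper: both arguments compute the two parenthesized composites explicitly, observe that the map component reduces to associativity of ordinary composition, and verify equality of the two ideal components in $J_{P_1}$ and $J_{P_4}$ by elementary manipulations of the ideals $A_2$, $B_2$ and the pullback of $B_3 \wedge C_3$. The paper phrases the key step as the set-level identity $f^{-1}(B_2 \cup g^{-1}(C_3)) = f^{-1}(B_2) \cup (g\circ f)^{-1}(C_3)$ (together with the analogous image identity on the $P_4$ side) rather than as distributivity in $J_{P_2}$, but this is the same bookkeeping you describe.
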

\begin{proof}
 Suppose that $P_1, P_2, P_3, P_4 \in \F$, and that we have three morphisms as follows:
\[
X_{P_1} \overset{(A_1,A_2,f )}{\longrightarrow} X_{P_2} \overset{(B_2,B_3,g)}{\longrightarrow}   X_{P_3} \overset{(C_3,C_4,h )}{\longrightarrow} X_{P_4}
\]
Given a poset $P$ and subsets $S_1, \cdots, S_k$ of $P$, denote by $[S_1, \cdots, S_k]$ the smallest order ideal containing $\cup^{k}_{i=1}S_i $. 
We have :
\begin{align*}
(C_3,C_4,h) \circ  &((B_2,B_3,g) \circ (A_1,A_2,f)) \\ & = (C_3,C_4,h) \circ ([A_1,f^{-1}(B_2 )], g(A_2 \backslash B_2), g \circ f) \\
&= ([A_1, f^{-1}(B_2),  (g \circ f)^{-1} (C_3)], h(g(A_2 \backslash B_2) \backslash C_3), h \circ g \circ f)
\end{align*}
whereas
\begin{align*}
((C_3,C_4,h) \circ & (B_2,B_3,g) ) \circ (A_1,A_2,f) \\
&= ([B_2, g^{-1}(C_3)], h(B_3 \backslash C_3), h \circ g)  \circ (A_1,A_2,f) \\
&= ([A_1, f^{-1}([B_2, g^{-1}(C_3)])], h \circ g (A_2 \backslash [B_2, g^{-1}(C_3)]), h \circ g \circ f )
\end{align*}
We have $$f^{-1}([B_2, g^{-1}(C_3)]) = f^{-1}(B_2 \cup g^{-1}(C_3)) = f^{-1}(B_2) \cup (g \circ f)^{-1} (C_3)$$ which implies that
$$ [A_1, f^{-1}(B_2),  (g \circ f)^{-1} (C_3)] = [A_1, f^{-1}([B_2, g^{-1}(C_3)])] $$ and
$$ h(g(A_2 \backslash B_2) \backslash C_3) = h \circ g ( A_2 \backslash (B_2 \cup g^{-1} (C_3) )) =  h \circ g (A_2 \backslash [B_2, g^{-1}(C_3)]) $$
This proves the two compositions are equal. 
\end{proof}
\bigskip

\noindent Finally,
\begin{itemize}
\item We refer to $X_{I_2}$ as the \emph{image} of the morphism $(I_1, I_2, f): X_{P_1} \rightarrow X_{P_2}$. 
\item We denote by $\on{Iso}(\C_{\F})$ the collection of isomorphism classes of objects in $\C_{\F}$, and by $[X_P]$ the isomorphism class of $X_P \in \C_{\F}$. 
\end{itemize}

\section{Properties of the categories $\C_{\F}$}

We now enumerate some of the properties of the categories $\C_{\F}$. 
\bigskip
\begin{enumerate}
\item The empty poset $\emptyset$ is an initial, terminal, and therefore null object. We will sometimes denote it by $X_{\emptyset}$.   
\bigskip
\item We can equip $\C_{\F}$ with a symmetric monoidal structure by defining
\[
X_{P_1} \oplus X_{P_2}  := X_{P_1 + P_2}.
\]
\item The indecomposable objects of $\C_{\F}$ are the $X_P$ with $P$ a connected poset in $\F$.
\bigskip
\item The irreducible objects of $\C_{\F}$ are the $X_{P}$ where $P$ is a one-element poset. 
\bigskip
\item  \label{kernel} Every morphism 
\begin{equation} \label{morphism}
(I_1, I_2,f) : X_{P_1} \rightarrow X_{P_2}
\end{equation}
has a kernel
\[
(\emptyset,I_1, id):  X_{I_1} \rightarrow X_{P_1} 
\]
\bigskip
\item \label{cokernel} Similarly, every morphism \ref{morphism} possesses a cokernel
\[
(I_2, P_2 \backslash I_2, id) : X_{P_2} \rightarrow X_{P_2 \backslash I_2}
\]
\medskip
\noindent We will frequently use the notation $X_{P_2}/X_{P_1}$ for $coker((I_1,I_2,f))$. 

\bigskip
\noindent {\bf Note:} Properties \ref{kernel} and \ref{cokernel} imply that the notion of exact sequence makes sense in $\C_{\F}$.
\bigskip
\item All monomorphisms are of the form
\[
(\emptyset,I, f) : X_Q \rightarrow X_{P}
\]
where $I \in J_P$, and $f: Q \rightarrow I \in \M(Q,I)$. Monomorphisms $X_Q \rightarrow X_P$ with a fixed image $X_{I}$ form a torsor over $\on{Aut}_{\M}(I)$. 
All epimorphisms are of the form
\[
(I,\emptyset, g) : X_P \rightarrow X_{Q}
\]
where $I \in J_P$ and $g: P \backslash I \rightarrow Q \in \M(P \backslash I, Q)$. Epimorphisms with fixed kernel $X_I$ form a torsor over $\on{Aut}_{\M}(P \backslash I)$ 
\bigskip
\item
Sequences of the form \label{propses}
\begin{equation} \label{ses}
X_{\emptyset} \overset{(\emptyset, \emptyset, id)}{\rightarrow} X_{I} \overset{(\emptyset,I, id)}{\longrightarrow} X_{P} \overset {(I,\emptyset,id)}{\longrightarrow} X_{P \backslash I} \overset{(P \backslash I, \emptyset, id)}{\rightarrow} X_{\emptyset}
\end{equation} 
with $I \in J_P$ are short exact, and all other short exact sequences with $X_P$ in the middle arise by composing with isomorphisms $X_I \rightarrow X_{I'}$ and $X_{P \backslash I} \rightarrow X_Q$ on the left and right. 
\bigskip
\item

 \label{quotients}  Given an object $X_{P}$ and a subobject $X_{I}, I \in J_P$,  the isomorphism $J_{P \backslash I} \simeq [I,P]$ translates into the statement that there is a bijection between subobjects of $X_P/X_I$ and order ideals $J \in J_P$ such that $I \subset J \subset P$ via $X_J \leftrightarrow J$. The bijection is compatible with quotients, in the sense that $(X_P/X_I)/(X_J/X_I) \simeq X_J/X_I$. 
 

\bigskip
\item Since the posets in $\F$ are finite, $\on{Hom}(X_{P_1},X_{P_2})$ is a finite set. 
\bigskip
\item We may define Yoneda $\on{Ext}^{n}(X_{P_1}, X_{P_2})$ as the equivalence class of $n$--step exact sequences with $X_{P_1}, X_{P_2}$ on the right and left respectively.  $\on{Ext}^{n} (X_{P_1}, X_{P_2})$ is a finite set. Concatenation of exact sequences makes $$\mathbb{E}xt^* := \cup_{A,B \in I(\C_{\F}), n} \on{Ext}^n (A,B)$$ into a monoid.  
\bigskip
\item We may define the Grothendieck group of $\C_{\F}$, $K_0(\C_{\F})$, as 
\[
K(\C_{\F}) = \bigoplus_{A \in \C_{\F}} \mathbb{Z}[A] / \sim
\]
where $\sim$ is generated by  $A+B-C$ for short exact sequences
\[
X_{\emptyset} \rightarrow A \rightarrow C \rightarrow B \rightarrow X_{\emptyset}
\]
We denote by $k(A)$ the class of an object in $K_0(\C_{\F})$. 
\end{enumerate}

\section{Ringel-Hall algebras and incidence algebras}

\subsection{Incidence Hopf algebras}

We begin by recalling the definition of the incidence Hopf algebra of a hereditary interval-closed family of posets introduced in \cite{Sch}. Incidence algebras of posets were originally introduced in \cite{R}. 

A family $\P$ of finite intervals is said to be \emph{interval closed}, if it is non-empty, and for all $P \in \P$ and $x \leq y \in P$, the interval $[x,y]$ belongs to $\P$. An \emph{order compatible relation} on an interval closed family $\P$ is an equivalence relation $\sim$ such that whenever $P \sim Q$ in $\P$, there exists a bijection $\phi: P \rightarrow Q$ such that $[0_P, x] \sim [0_Q,\phi(x)]$ and $[x,1_P] \sim [\phi(x), 1_Q]$ for all $x \in P$. Typical examples of order compatible relations are poset isomorphism, or isomorphism preserving some additional structure (such as a coloring). We denote by $\tilde{\P}$ the set equivalence classes of $\P$ under $\sim$, i.e. $\tilde{\P} = \P/\sim$, and by $[P]$ the equivalence class of a poset $P$ in $\tilde{\P}$.   The \emph{incidence algebra} of the family $(\P, \sim)$, denoted $\H_{\P,\sim}$, is
\[
\H_{\P,\sim} := \{ f: \tilde{\P} \rightarrow \mathbb{Q} : |supp(f)| < \infty \}
\] 
(note that the finiteness of the support of $f$ is not standard). $\H_{\P,\sim}$ is naturally a $\mathbb{Q}$--vector space, and becomes an associative $\mathbb{Q}$--algebra under the convolution product
\begin{equation} \label{the_prod}
f \bullet g ([P]) := \sum_{x \in P} f ([0_P,x]) g([x,1_P])
\end{equation}

We would now like to equip $\H_{\P, \sim}$ with a Hopf algebra structure. To do this, the family $\P$ and the relation $\sim$ must satisfy some additional properties. A \emph{hereditary family} is an interval closed family  $\P$ of posets which is closed under the formation of direct products. We will assume that $\sim$ satisfies the following two properties:
\begin{itemize}
\item whenever $P \sim Q$ in $\P$, then $P \times R \sim Q \times R$ and $R \times P \sim R \times Q$ for all $R \in \P$
\item if $P,Q \in \P$ and $|Q|=1$, then $P \times Q \sim Q \times P \sim P$
\end{itemize}
An order compatible relation $\sim$ on $\P$ satisfying these additional properties is called a \emph{Hopf relation}. 
In this case, $\tilde{\P}$ is a monoid under the operation $[P][Q] = [P \times Q]$ with unit the class of any one-element poset. 
We may now introduce a coproduct on $\H_{\P,\sim}$ 
\begin{equation} \label{the_coprod}
\Delta(f)([M], [N]) := f([M \times N])
\end{equation}
It is shown in \cite{Sch} that $\Delta$ equips $\H_{\P,\sim}$ with the structure of a bialgebra, and furthermore, that an antipode exists, making it into a Hopf algebra, which we call the \emph{incidence Hopf algebra} of $(\P,\sim)$. 

\bigskip

\noindent { \bf Note: } the original definition of incidence Hopf algebra given in \cite{Sch} is dual to the one used here.  

\bigskip

Suppose that $\F$ is a collection of finite posets which is closed under the operation of taking convex subposets and disjoint unions. The collection of order ideals $$\P(\F) := \{ J_P : P \in \F \}$$ is then a collection of intervals which is interval-closed and hereditary. If we define $J_P \sim J_Q$ whenever there exists an $f: P \rightarrow Q \in \M(P,Q)$, then $\sim$ is a Hopf relation, and so we may consider the Hopf algebra $\H_{\P(\F), \sim}$. Using the fact that for $I, L \in J_P$, $[I,L] \simeq J_{L \backslash I }$ and $J_{P + Q} \simeq J_P \times J_Q$, the product \ref{the_prod} and coproduct \ref{the_coprod} become:
\begin{equation} \label{s_prod}
f \bullet g ([J_P]) := \sum_{I \in J_P} f([J_I])g([J_{P \backslash I}])
\end{equation}
\begin{equation} \label{s_coprod}
\Delta(f)([J_P], [J_Q]) := f([J_{P  + Q}])
\end{equation}

\subsection{Ringel-Hall algebras }

 For an introduction to Ringel-Hall algebras in the context of abelian categories, see \cite{S}. 
We define the Ringel-Hall algebra of $\C_{\F}$, denoted $\H_{\C_{\F}}$, to be the 
$\mathbb{Q}$--vector space of finitely supported functions on isomorphism classes of $\C_{\F}$. I.e.
\[
\H_{\C_{\F}} := \{ f: \on{Iso}(\C_{\F}) \rightarrow \mathbb{Q} | |supp(f)| < \infty  \}
\]
As a $\mathbb{Q}$--vector space it is spanned by the delta functions $\delta_A, A \in \on{Iso}(\C_{\F})$. The algebra structure on $\H_{\C_{\F}}$ is given by the convolution product:
\begin{equation} \label{prod}
f \star g (M) = \sum_{A \subset M} f(A) g(M/A) 
\end{equation}
$\H_{\C_{\F}}$ possesses a co-commutative co-product given by
\begin{equation} \label{cop}
\Delta(f)(M,N)=f(M \oplus N) 
\end{equation}
as well as a natural $K^+_0 (\C_{\F})$--grading in which  $\delta_A$ has degree $k(A) \in K^+_0 (\C_{\F})$. 

The subobjects of  $X_P \in \C_{\F}$ are exactly $X_I$ for $I \in J_P$, and the product \ref{prod} becomes
\[
f \star g ([X_P]) = \sum_{I \in J_P} f([X_I]) g([X_{P \backslash I}])
\]
while the coproduct becomes
\[
\Delta(f)([X_P], [X_Q]) = f([X_{P + Q}])
\]
Thus, the map $$ \phi: \H_{\C_{\F}} \rightarrow \H_{\P(\F), \sim} $$
 determined by 
 $$ \phi(f)([J_P]) := f([X_P]) $$
 is an isomorphism of Hopf algebras. Recall that a Hopf algebra $A$ over a field $k$ is \emph{connected} if it possesses a $\mathbb{Z}_{\geq 0}$--grading such that $A_0 = k $. In addition to the $K^+_0(\C_{\F})$--grading, $\H_{\C_{\F}}$ possesses a grading by the order of the poset - i.e. we may assign $\deg(\delta_{X_P}) = |P|$. This gives it the structure of graded connected Hopf algebra.  The Milnor-Moore theorem implies that $\H_{\C_{\F}}$ is the enveloping algebra of the Lie algebra of its primitive elements, which we denote by $\n_{\F}$ - i.e. $\H_{\C_{F}} \simeq U(\n_{\F})$. We have thus established the following:
 
 \begin{theorem}
 The Ringel-Hall algebra of the category $\C_{\F}$ is isomorphic to the Incidence Hopf algebra of the family $\P(\F)$. These are graded connected Hopf algebras, graded by the order of poset, and isomorphic to $U(\n_{\F})$, where $\n_{\F}$ denotes the Lie algebra of its primitive elements. 
 \end{theorem}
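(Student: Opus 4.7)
The plan is to verify that the map $\phi: \H_{\C_{\F}} \to \H_{\P(\F),\sim}$ defined by $\phi(f)([J_P]) := f([X_P])$ is a well-defined isomorphism of Hopf algebras, then install the order grading and apply Milnor-Moore. Well-definedness amounts to the claim that $[X_P] = [X_Q]$ in $\on{Iso}(\C_{\F})$ if and only if $J_P \sim J_Q$ in $\P(\F)$, which I would deduce from the description of isomorphisms in $\C_{\F}$: by property (7) an isomorphism $X_P \to X_Q$ must take the form $(\emptyset, Q, f)$ with $f \in \M(P,Q)$, so the two equivalence relations literally coincide. Consequently $\phi$ sends basis delta-functions to basis delta-functions and is a linear bijection.

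Next I would check that $\phi$ intertwines products and coproducts. Property (\ref{quotients}) together with property (\ref{kernel}) says that subobjects of $X_P$ are precisely $X_I$ for $I \in J_P$, and property (\ref{cokernel}) gives $X_P/X_I \simeq X_{P \backslash I}$; thus the Ringel-Hall product formula (\ref{prod}) specialises to $f \star g([X_P]) = \sum_{I \in J_P} f([X_I])g([X_{P \backslash I}])$, which matches (\ref{s_prod}) under $\phi$. For the coproduct, $X_P \oplus X_Q = X_{P+Q}$ by definition of the monoidal structure, so (\ref{cop}) matches (\ref{s_coprod}). The unit and counit compatibility are immediate since the empty poset is the null object and corresponds to the one-element lattice $J_{\emptyset}$. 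Compatibility with the antipode is then automatic once $\phi$ is shown to be a bialgebra map between connected graded bialgebras, because the antipode in a connected graded bialgebra is uniquely determined by the bialgebra structure.

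For the second assertion I would assign $\deg(\delta_{X_P}) = |P|$. Since disjoint union of posets is additive on cardinality, and every short exact sequence in $\C_{\F}$ has the shape (\ref{ses}) with $|P| = |I| + |P \backslash I|$, both the product (\ref{prod}) and the coproduct (\ref{cop}) respect this grading. The degree-zero piece is spanned by $\delta_{X_{\emptyset}}$, so $\H_{\C_{\F}}$ is graded connected. Finally, co-commutativity of $\Delta$ is visible from (\ref{cop}) together with $X_{P+Q} \simeq X_{Q+P}$, so the Milnor-Moore theorem applies and yields $\H_{\C_{\F}} \simeq U(\n_{\F})$ where $\n_{\F}$ is the Lie algebra of primitives.

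I do not anticipate a serious obstacle: every nontrivial ingredient (characterisation of monomorphisms, cokernels, subobjects, the lattice identification $J_{P+Q} \simeq J_P \times J_Q$, and the Hopf relation property of $\sim$) has already been established earlier in the paper. The only point that deserves a careful sentence is the matching of isomorphism classes with equivalence classes under $\sim$, since this is what ensures $\phi$ is a bijection of bases rather than merely a surjection; everything else is a direct translation between two parallel combinatorial formalisms.
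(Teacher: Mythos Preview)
Your proposal is correct and follows essentially the same route as the paper: define $\phi$ by $\phi(f)([J_P]) = f([X_P])$, match the Ringel-Hall product and coproduct with formulas (\ref{s_prod}) and (\ref{s_coprod}) via the identification of subobjects of $X_P$ with order ideals $I \in J_P$, impose the grading by $|P|$, and invoke Milnor-Moore. You are in fact more careful than the paper on two points it leaves implicit---the bijection between $\on{Iso}(\C_{\F})$ and $\widetilde{\P(\F)}$ coming from the characterisation of isomorphisms, and the automatic antipode compatibility for connected graded bialgebras---but there is no difference in strategy.
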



\section{Examples}

In this section, we give some examples of families $\F$ of posets closed under disjoint unions and the operation of taking convex subposets. 

\bigskip

\noindent {\bf Example 1: }
Let $\F= \on{Fin}$ be the collection of all finite posets, and let $M(P,Q)$ consist of poset isomorphisms. $\on{Fin}$ is clearly closed under disjoint unions and taking convex subposets.  
I claim that $K_0 (\C_{\on{Fin}}) = \mathbb{Z}$. Let $P$ be a finite poset, and $m 
\in \P$ a minimal element. Then $m$ is also an order ideal in $P$, and so we have an exact sequence
\[
\emptyset \rightarrow X_{\bullet} \rightarrow X_{P} \rightarrow X_{P \backslash m} \rightarrow \emptyset
\]
where $\bullet$ denotes the one-element poset. Repeating this procedure with $P \backslash m$, we see that in $K_0 (\C_{\on{Fin}})$ every element can be written as multiple of $X_{\bullet}$, with $X_{P} \sim |P| X_{\bullet}$, and $K_0 (\C_{\on{Fin}}) \simeq \mathbb{Z}$. Thus, the $K_0^+$--grading on $\H_{\C_{\on{Fin}}}$ coincides with that by order of poset. 

$\on{Iso}(\C_{\on{Fin}})$ consists of isomorphism classes of finite posets. 
The Lie algebra $\n_{\on{Fin}}$ is spanned by $\delta_{[X_P]}$ for $P$ connected posets. For $P,Q \in \on{Fin}$, both connected, we have
\[
\delta_{[X_P]} \star \delta_{[X_Q]} = \sum_{ \{ \substack{ R \in \on{Iso}(\on{Fin}) | P \in J(R), \\ Q \simeq R \backslash P}\}}  N(P,Q;R) \delta_{[X_R]}
\]
where $N(P,Q;R):=|\{I \in J_R| I \simeq P\}|$, and
\[
[\delta_{[X_P]}, \delta_{[X_Q]}] = \delta_{[X_P]} \star \delta_{[X_Q]} - \delta_{[X_Q]} \star \delta_{[X_P]}.
\]

\bigskip

\noindent {\bf Example 2: } 
Let $\F=\S$ denote the collection of all finite sets (including the empty set). A finite set can be viewed as a poset where any two distinct elements are incomparable. $\S$ is clearly closed under disjoint unions and taking convex subposets (which coincide with subsets). Also, the order ideals of $S \in \S$ are exactly the  subsets of $S$. Let $M(P,Q)$ consist of set isomorphisms.  By the same argument as in the previous example, we have $K_0 (\C_{\S}) \simeq \mathbb{Z}$. $\on{Iso}(\C_{S}) \simeq \mathbb{Z}_{\geq 0}$, and we denote by $[n]$ the isomorphism class of the set with $n$ elements.  We have
\[
\delta_{[n]} \star \delta_{[m]} = {{m+n} \choose n} \delta_{[m+n]}
\]
$\n_{S}$ is abelian, and isomorphic the one-dimensional Lie algebra spanned by $\delta_{[1]}$. $\H_{\C_{\S}}$ is dual to the binomial Hopf algebra in \cite{Sch}. 

\bigskip

\noindent {\bf Example 3:}

Let $\S\{k\}$ denote the collection of all finite $k$--colored sets (including the empty set), and take $M(P,Q)$ to consist of color-preserving set isomorphism. Clearly, the previous example corresponds to $k=1$. We have $K_0 (\C_{\S\{ k\}}) \simeq \mathbb{Z}^k$ and  $\on{Iso}(\C_{\S\{k\}}) \simeq (\mathbb{Z}_{\geq 0})^k$. Denoting by $[n_1, \cdots, n_k]$ the isomorphism class of the set consisting of $n_1$ elements of color $c_1$, $\cdots$, $n_k$ elements of color $c_k$, we have
\[
\delta_{[n_1, \cdots, n_k]} \star \delta_{[m_1, \cdots, m_k]} = \left( \prod^{k}_{i} {{n_i+m_i} \choose n_i} \right) \delta_{[n_1+m_1, \cdots, n_k + m_k]}
\]
$\n_{\S\{ k\}}$ is a $k$--dimensional abelian Lie algebra spanned by $\delta_{e_i}$, where $e_i$ denotes the $k$--tuple $[0, \cdots ,1, \cdots, 0]$ with $1$ in the $i$th spot. 

\bigskip

The following two examples are treated in detail in \cite{KS}, and we refer the reader there for details. Please see also \cite{Sch,FG}. The resulting Hopf algebras (or rather their duals) introduced in \cite{K, CK}, form the algebraic backbone of the renormalization process in quantum field theory. The corresponding Lie algebras $\n_{\F}$ are studied in \cite{CK2}. 

\bigskip

\noindent {\bf Example 4:}

Recall that a rooted tree $t$ defines a poset whose Hasse diagram is $t$. 
Let $\F = \on{RF}$ denote the family of posets defined by rooted forests ( i.e. disjoint unions of rooted trees). It is obviously closed under disjoint unions. An order ideal in a rooted forest corresponds to an \emph{admissible cut} in the sense of \cite{CK} - that is, a cut having the property that any path from root to leaf encounters at most one cut edge, and so $\on{RF}$ is closed under the operation of taking convex posets. 
We have $K_0 (\C_{\on{RF}}) = \mathbb{Z}$, and as shown in \cite{KS},  $\H_{\C_{\on{RF}}}$ is isomorphic to the dual of the Connes-Kreimer Hopf algebra on forests, or equivalently, to the Grossman-Larson Hopf algebra. 

This example also has a colored version, where we consider the family $\on{RF}\{k \}$ of $k$--colored rooted forests. In this case, $K_0(\C_{\on{RF}\{k \}}) \simeq (\mathbb{Z}_{\geq 0})^k$.

\bigskip

\noindent {\bf Example 5:}

A graph determines a poset of subgraphs under inclusion. We may for instance consider the poset $\F = \on{FG}$ of subgraphs of Feynman graphs of a quantum field theory such as $\phi^3$ theory, which is closed under disjoint unions and convex posets. The Ringel-Hall algebra $\H_{\C_{\on{FG}}}$ is isomorphic to the dual of the Connes-Kreimer Hopf algebra on Feynman graphs. Please see \cite{KS} for details. 

In the case of $\phi^3$ theory, it is shown in \cite{KrS} that $K_0 (\C_{\on{FG}}) \simeq \mathbb{Z}[p], p \in \mathfrak{P}$, where $\mathfrak{P}$ is the set of primitively divergent graphs.

\newpage

\end{document}